\newtheorem{theorem}{Theorem}[section]
\newtheorem{lemma}[theorem]{Lemma}
\newtheorem{remark}[theorem]{Remark}
\newcommand{\R}{\mathbb{R}}
\def\N{\mathbb{N}}
\def\epsilon{\varepsilon}
\def\tilde{\widetilde}
\def\div{\mathrm{div}\,}
\def\curl{\mathrm{curl}\,}
\newcommand{\be}{\begin{equation}}
\newcommand{\ee}{\end{equation}}
\newcommand{\baa}{\begin{array}}
\newcommand{\eaa}{\end{array}}
\newcommand{\ba}{\begin{eqnarray}}
\newcommand{\ea}{\end{eqnarray}}
\numberwithin{equation}{section}
\begin{document}
\date{}
\title{\bf{Potential flows away from stagnation\\ in infinite cylinders}
\thanks{This work has been supported by the Excellence Initiative of Aix-Marseille Universit\'e~-~A*MIDEX, a French ``Investissements d'Avenir'' programme, and by the French Agence Nationale de la Recherche (ANR), in the framework of the RESISTE (ANR-18-CE45-0019) and ReaCh (ANR-23-CE40-0023-02) projects. The first author is grateful to the hospitality of the University of Edinburgh, where part of this work was done. The work of the second author was partially supported by EPSRC grant EP/S03157X/1 ``Mean curvature measure of free boundary".}}
\author{Fran\c cois Hamel$^{\hbox{\small{ a}}}$ and Aram Karakhanyan$^{\hbox{\small{ b }}}$\\
\\
\footnotesize{$^{\hbox{a }}$Aix Marseille Univ, CNRS, I2M, Marseille, France} \\
\footnotesize{$^{\hbox{b }}$School of Mathematics, University of Edinburgh,}\\
\footnotesize{James Clerk Maxwell Building, The King’s Buildings, Peter Guthrie Tait Road, Edinburgh EH9 3FD, UK}\\
}
\maketitle
	
\begin{abstract}
\noindent{}Steady incompressible potential flows of an inviscid or viscous fluid are considered in infinite $N$-dimensional cylinders with tangential boundary conditions in any dimension $N\ge2$. We show that such flows, if away from stagnation, are constant and parallel to the direction of the cylinder. This means equivalently that a harmonic function whose gradient is bounded away from zero in an infinite cylinder with Neumann boundary conditions is an affine function. The proof of this rigidity result uses a combination of ODE and PDE arguments, respectively for the streamlines of the flow and the harmonic potential function. 
\vskip 6pt
\noindent{\small{\it{Keywords}}: Euler equations, Navier-Stokes equations, stagnation, potential flows, rigidity.}
\vskip2pt
\noindent{\small{\it{Mathematics Subject Classification}}: 35B53, 35Q30, 35Q31, 76D05.}
\end{abstract}

%%%%%%%%%%%%%%%%%%%%%%%%%%%%%%%%%%%%%%%%%%%%%%%%%%%%%%%%%%
%%%%%%%%%%%%%%%%%%%%%%%%%%%%%%%%%%%%%%%%%%%%%%%%%%%%%%%%%%

\section{Introduction and main results}\label{intro}

We are firstly concerned with steady incompressible flows solving the Euler equations
\be\label{euler}\left\{\baa{rcll}
v\cdot\nabla\,v+\nabla p & = & 0 & \hbox{in }\Omega,\vspace{3pt}\\
\div v & = & 0 & \hbox{in }\Omega,\vspace{3pt}\\
v\cdot n & = & 0 & \hbox{on }\partial\Omega,\eaa\right.
\ee
in an infinite cylindrical domain
$$\Omega=\R\times\omega\subset\R^N$$
of any dimension $N\ge2$, with bounded connected open section $\omega\subset\R^{N-1}$ of class $C^1$ and outward unit normal $n$. The main results also apply to the steady incompressible Navier-Stokes equations
\be\label{ns}\left\{\baa{rcll}
\mu\,\Delta v+v\cdot\nabla\,v+\nabla p & = & 0 & \hbox{in }\Omega,\vspace{3pt}\\
\div v & = & 0 & \hbox{in }\Omega,\vspace{3pt}\\
v\cdot n & = & 0 & \hbox{on }\partial\Omega,\eaa\right.
\ee
with kinematic viscosity $\mu>0$. The unknown pressure $p$ is assumed to be of class $C^1(\Omega,\R)$ and the unknown velocity field $v$ is assumed to be of class $C^1(\Omega,\R^N)\cap C(\overline{\Omega},\R^N)$. The Navier-Stokes equations are then a priori understood in a weak sense, the velocity field $v$ belonging to $W^{2,p}_{loc}(\Omega,\R^N)$ for all $p\in[1,\infty)$, from standard elliptic estimates.

\subsubsection*{Main results}

We consider potential flows, that is, there exists a $C^2(\Omega,\R)\cap C^1(\overline{\Omega},\R)$ function $\psi$ such that
\be\label{hyp1}
v=\nabla\psi\ \hbox{ in $\overline{\Omega}$},
\ee
and we assume that the flows are away from stagnation, that is,
\be\label{hyp2}
\inf_{\overline{\Omega}}|v|>0.
\ee
Throughout the paper, we call $x\mapsto|x|$ the Euclidean norm in $\R^N$, and $(x,y)\mapsto x\cdot y$ the Euclidean inner product in $\R^N$. We refer the reader to the books \cite{CM,LR,MP} for more discussion on the Euler and the Navier-Stokes equations, as well as various applications. 
 
\begin{theorem}\label{th1}
Let $(v,p)$ be a solution of the steady incompressible Euler or Navier-Stokes equations~\eqref{euler} or~\eqref{ns}, such that $v$ is a potential flow away from stagnation. Then $v$ is constant and parallel to the cylinder. More precisely, there is $a\in\R^*$ such that
\be\label{parallel}
v=(a,0,\cdots,0)\hbox{ in $\overline{\Omega}$},
\ee
and $p$ is constant in $\overline{\Omega}$.
\end{theorem}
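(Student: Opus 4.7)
The strategy is to reduce both~\eqref{euler} and~\eqref{ns} to a rigidity statement for the scalar potential $\psi$. Since $v=\nabla\psi$ is divergence-free, $\psi$ is harmonic in $\Omega$ and the slip condition becomes $\partial_n\psi=0$ on $\partial\Omega$; moreover, $v\cdot\nabla v=\nabla(|v|^2/2)$ and $\Delta v=\nabla\Delta\psi=0$, so both systems collapse to the Bernoulli identity $\nabla(|v|^2/2+p)\equiv 0$, from which $p$ is determined by $|v|$ up to a constant. It thus suffices to show $\nabla\psi=(a,0,\dots,0)$ for some $a\in\R^*$. Writing $x=(x_1,x')$ with $x'\in\omega$ and averaging in the cross-section, $\Psi(x_1):=|\omega|^{-1}\int_\omega\psi(x_1,\cdot)\,dx'$, one integration by parts against the Neumann condition gives $\Psi''\equiv 0$, hence $\Psi(x_1)=ax_1+b$. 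The function $\phi:=\psi-ax_1-b$ is harmonic, satisfies $\partial_n\phi=0$, and has zero mean on every cross-section, and the theorem reduces to proving $\phi\equiv 0$.

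For the ODE ingredient I analyse the streamlines $\dot X=\nabla\psi(X)$, which are globally defined on $\R$ by~\eqref{hyp2} and preserve $\overline\Omega$ by tangency. Along each streamline $\frac{d}{dt}\psi(X(t))=|\nabla\psi(X(t))|^2\geq c^2>0$ with $c:=\inf|v|$; combined with the boundedness of $\psi$ on every finite slab $[-M,M]\times\overline\omega$, this monotonicity forces $X_1(t)\to\pm\infty$ as $t\to\pm\infty$. The function $v_1=\psi_{x_1}$ is itself harmonic on $\Omega$ with $\partial_n v_1=\partial_{x_1}\partial_n\psi=0$ on $\partial\Omega$ (because $n$ is independent of $x_1$ along the cylinder). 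Combining the strong maximum principle, the Hopf lemma, and the streamline monotonicity rules out nontrivial sign changes or zeros of $v_1$ in the nondegenerate case, while the borderline case $v_1\equiv 0$ is excluded by the Neumann Laplace problem on $\omega$ together with~\eqref{hyp2}. Up to replacing $x_1$ by $-x_1$ I may therefore assume $v_1>0$ on $\overline\Omega$, so that every streamline is the graph $(x_1,Y(x_1))$ of a $C^1$ map satisfying $Y'(x_1)=v'/v_1$.

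The PDE endgame is an energy-convexity argument for $w:=\phi_{x_1}=\psi_{x_1}-a$, which is harmonic with $\partial_n w=0$ and has zero mean on every cross-section. Setting $W(x_1):=\int_\omega w(x_1,\cdot)^2\,dx'$, integration by parts (using $\Delta w=0$ and $\partial_n w=0$) yields
\[
W''(x_1)=2\int_\omega|\nabla w(x_1,\cdot)|^2\,dx'\geq 2\lambda_1\,W(x_1),
\]
with $\lambda_1>0$ the smallest positive Neumann eigenvalue of $-\Delta$ on $\omega$, via the Poincar\'e inequality on mean-zero functions. Since bounded convex functions on $\R$ are constant, boundedness of $W$ combined with $W''\geq 2\lambda_1 W\geq 0$ forces $W\equiv 0$; this yields $\nabla w\equiv 0$, so $\psi=ax_1+f(x')$ with $f$ constant by the Neumann Laplace problem on $\omega$, i.e.\ $\psi=ax_1+b$ with $a\neq 0$ by~\eqref{hyp2}, and Bernoulli delivers $p$ constant.

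The main obstacle is therefore to prove that $W$—equivalently $|v|$—is bounded on $\overline\Omega$, and this is exactly where the ODE and PDE ingredients must be coupled. Heuristically, any unbounded growth of $W$ at $x_1=\pm\infty$ is driven by nontrivial Neumann eigenmodes in the formal expansion $\phi(x_1,x')=\sum_{k\geq 1}(p_k e^{\sqrt{\lambda_k}x_1}+q_k e^{-\sqrt{\lambda_k}x_1})\phi_k(x')$, which asymptotically concentrate $|\nabla\psi|$ along configurations governed by the critical set of a dominant eigenfunction, or which produce destructive interference between several modes in the interior; following a carefully chosen streamline through such a configuration, using $\dot X=\nabla\psi(X)$ together with the flux identity $\int_\omega v_1\,dx'=a|\omega|$ and the monotonicity of $\psi$ along streamlines, should then produce an interior sequence on which $|\nabla\psi|\to 0$, contradicting~\eqref{hyp2}. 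Making this rigorous—matching the spectral/asymptotic structure of the harmonic potential with the ODE dynamics of streamlines crossing the degeneracy sets of eigenfunctions—is the technically most delicate step of the proof.
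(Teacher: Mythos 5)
Your reduction to the harmonic potential, the cross-sectional averaging giving $\Psi(x_1)=ax_1+b$, and the convexity inequality $W''\ge 2\lambda_1 W$ are all sound ideas (modulo a regularity caveat: with $\omega$ only $C^1$ and $\psi$ only $C^1(\overline{\Omega},\R)$, the Neumann condition $\partial_n\partial_{x_1}\psi=0$ for $w$ and the integrations by parts behind $W''=2\int_\omega|\nabla w|^2$ require second derivatives up to $\partial\Omega$, which you do not justify; the paper only obtains such boundary regularity in the flat two-dimensional case, by reflection). The genuine gap is the step you yourself flag: the boundedness of $W$ (equivalently of $\psi-ax_1$, or of $\|\nabla\psi(x_1,\cdot)\|_{L^2(\omega)}$ uniformly in $x_1$). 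This is not a technical afterthought — it is the entire content of the theorem. Convexity alone allows exponential growth: $\psi=\cosh(\sqrt{\lambda}\,x_1)\,\phi_\lambda(x_2,\dots,x_N)$, with $\phi_\lambda$ a Neumann eigenfunction of $\omega$, satisfies every structural hypothesis except~\eqref{hyp2}, so the non-stagnation condition must enter quantitatively precisely at this point, and in your write-up it enters only through the closing heuristic about eigenmode concentration and streamlines, which is not an argument. In addition, the intermediate claim that $v_1>0$ on $\overline{\Omega}$ (after a reflection) is unsupported: the strong maximum principle and Hopf lemma applied to the harmonic function $v_1$, together with monotonicity of $\psi$ along streamlines, do not exclude zeros of $v_1$ — monotonicity controls $\psi$, not the pointwise sign of $\partial_{x_1}\psi$, and in effect $v_1>0$ is (essentially) the conclusion of the theorem; luckily your endgame never uses it. A smaller slip: global existence of the streamlines on all of $\R$ does not follow from $\inf_{\overline{\Omega}}|v|>0$, since $v$ is not assumed bounded and trajectories may reach infinity in finite time; the divergence $X_1\to\pm\infty$ must be stated on the maximal interval of existence.

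For comparison, the paper closes exactly this gap by a different route: the streamline analysis (Lemmas 2.1--2.2) yields that $\psi(x_1,\cdot)\to\pm\infty$ as $x_1\to\pm\infty$ \emph{uniformly} in $\overline{\omega}$, with the linear lower bound $\psi(x)+M\ge\eta\,x_1$ coming from $\frac{d}{d\tau}\psi(\xi_x(\tau))\ge\eta\,|\xi_x'(\tau)|$; then, setting $a:=\liminf_{x_1\to+\infty}\min_{\overline{\omega}}\psi(x_1,\cdot)/x_1$, comparison with affine harmonic functions (maximum principle plus Hopf lemma, legitimate because affine functions of $x_1$ satisfy the Neumann condition) gives $\psi\ge -M+ax_1$, and the Harnack inequality applied to the nonnegative harmonic function $\psi+M-ax_1$ converts the liminf information into the matching upper bound, so that $\psi-ax_1$ is bounded; a final maximum-principle argument then forces $\psi=ax_1+b$. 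If you want to keep your convexity endgame, you would still need an argument of this strength to bound $W$; as it stands, the proposal does not prove the theorem.
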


For potential flows, the incompressible Euler and Navier-Stokes equations~\eqref{euler} and~\eqref{ns} are equivalent, since $\Delta\partial_{x_i}\psi=0$ in $\Omega$ for all $i\in\{1,\cdots,N\}$ if $\Delta\psi=0$ in~$\Omega$. Actually, the Navier-Stokes equations are usually supplemented with no-slip boundary conditions, for which the normal part of the fluid velocity at a boundary point vanishes (as it is also usually assumed for the Euler equations) and its tangential part equals the velocity of the solid boundary point. Therefore, for an incompressible potential flow away from stagnation and obeying the Navier-Stokes equations in an infinite cylinder with no-slip boundary conditions, Theorem~\ref{th1} shows that the tangential part of the velocity at the boundary is necessarily constant, non-zero, and parallel to the direction of the cylinder. For the Navier-Stokes equations~\eqref{ns}, the case of vanishing tangential part of the velocity at the boundary is incompatible with~\eqref{hyp2}.  However, even with no-slip boundary conditions and vanishing tangential part of the velocity, in the regime of small kinematic viscosities $\mu$, there typically exist boundary layers where the tangential components of the fluid velocity vary rapidly from~$0$ to non-zero values, whereas the normal component does not vary much, and thus stays close to $0$. In that spirit, the Navier-Stokes system~\eqref{ns} with the hypothesis~\eqref{hyp2} can be viewed as an approximation of a model for a viscous flow away from stagnation away from a thin boundary layer.

As a matter of fact, Theorem~\ref{th1} is not only valid for the Euler or Navier-Stokes equations, even if these two systems are the motivation of our work: Theorem~\ref{th1} is based on the structural assumptions~\eqref{hyp1}-\eqref{hyp2} on the velocity field $v$, in addition to its incompressibility and the tangential boundary conditions. Namely, the following theorem holds.

\begin{theorem}\label{th2}
Let $v$ be an incompressible $C^1(\Omega,\R^N)\cap C(\overline{\Omega},\R^N)$ vector field satisfying $v\cdot n=0$ on~$\partial\Omega$ and~\eqref{hyp1}-\eqref{hyp2}. Then there is $a\in\R^*$ such that~\eqref{parallel} holds.
\end{theorem}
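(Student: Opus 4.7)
The plan is to prove the equivalent PDE formulation: $\psi$ is harmonic in $\Omega$ (since $\div v=\Delta\psi=0$), satisfies the Neumann boundary condition $\partial_n\psi=0$ on $\partial\Omega$ (from $v\cdot n=0$), and has no critical points by~\eqref{hyp2}; we want to conclude $\psi(x)=ax_1+b$ for some $a\in\R^*$, which gives~\eqref{parallel}.

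I would begin with an ODE analysis of the streamlines $\gamma_{x_0}$ of $v=\nabla\psi$. Since $v\cdot n=0$, the boundary $\partial\Omega$ is invariant under the flow, so each streamline is globally defined in $\overline{\Omega}$, and the identity $\frac{d}{dt}\psi(\gamma_{x_0}(t))=|\nabla\psi(\gamma_{x_0}(t))|^2\geq c^2>0$ (with $c=\inf_{\overline{\Omega}}|v|>0$) forces $\psi\circ\gamma_{x_0}$ to be strictly increasing and to tend to $\pm\infty$ as $t\to\pm\infty$. A compactness argument using the boundedness of $\omega$ and the local boundedness of $\psi$ on $\overline{\Omega}$ then yields that $x_1(\gamma_{x_0}(t))$ must tend to $+\infty$ or $-\infty$ with a definite sign at each end: otherwise the streamline would revisit a compact slice infinitely often while $\psi$ blew up along it, which is impossible. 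This establishes the structural picture that every streamline traverses the cylinder from one end to the other.

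Next I would combine this with a PDE argument based on the Fourier expansion of $\psi$ in the Neumann eigenbasis $(\phi_k,\lambda_k)_{k\geq 0}$ of $-\Delta_{x'}$ on $\omega$. Harmonicity and integration by parts over a slice, using the Neumann boundary condition, give $\psi(x_1,x')=ax_1+b+\sum_{k\geq 1}(A_k e^{\sqrt{\lambda_k}x_1}+B_k e^{-\sqrt{\lambda_k}x_1})\phi_k(x')$ with $a=|\omega|^{-1}\int_\omega\partial_{x_1}\psi(x_1,\cdot)\,dx'$ constant in $x_1$. The remaining work is to prove $A_k=B_k=0$ for all $k\geq 1$. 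I would argue by contradiction: if some mode survives, then in the asymptotic regime of large $|x_1|$ at one of the two ends of the cylinder a single mode dominates $\nabla\psi$, and at a suitable extremum of $\phi_{k_0}$ (an interior critical point obtained from a Hopf-type argument on the Neumann eigenvalue problem when available, or a boundary extremum combined with $\nabla_{x'}\psi\cdot n=0$ and the tangent direction along $\R\times\partial\omega$ otherwise) one locates a point where the dominant exponential plus the linear mode forces $\nabla\psi$ to vanish, contradicting~\eqref{hyp2}. Once all higher modes vanish, $\psi=ax_1+b$ and $|\nabla\psi|\geq c>0$ forces $a\in\R^*$.

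The principal obstacle is managing possible cancellations between distinct Fourier modes at the candidate zero-producing points: several modes could a priori conspire to keep $\nabla\psi$ bounded away from zero while each being individually non-trivial. This is where the streamline ODE analysis plays a decisive role, by pinning down a uniform direction at infinity for the flow; combined with the strict ordering of the exponents $\sqrt{\lambda_k}$, this should allow a mode-by-mode peeling argument that eliminates modes at each end of the cylinder in turn.
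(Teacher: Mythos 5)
Your first step is essentially the paper's Section~\ref{sec2}: the identity $\frac{d}{dt}\psi(\xi(t))=|\nabla\psi(\xi(t))|^2\ge c^2$ along streamlines, the conclusion that $\psi$ diverges along each streamline and that every streamline crosses the cylinder, and the uniform limits $\psi(x_1,\cdot)\to\pm\infty$ as $x_1\to\pm\infty$. One caveat: since $v$ is not assumed bounded, trajectories need not exist for all $t\in\R$, so the argument has to be run on maximal existence intervals (as the paper does); your ``globally defined'' phrasing glosses over this, but the structural conclusion is the right one.

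The genuine gap is in the second step. The slice-wise Neumann expansion $\psi(x_1,x')=ax_1+b+\sum_{k\ge1}\big(A_k e^{\sqrt{\lambda_k}x_1}+B_k e^{-\sqrt{\lambda_k}x_1}\big)\phi_k(x')$ and the constancy of $a$ are fine, but your mechanism for proving $A_k=B_k=0$ does not go through as described. Since no growth bound on $\psi$ is assumed, infinitely many coefficients may be nonzero, and then there is no single dominant mode as $x_1\to+\infty$: the exponents $\sqrt{\lambda_k}$ are unbounded and the relative size of the terms depends on the unknown decay of the coefficients. Evaluating at an extremum of one eigenfunction to force a zero of $\nabla\psi$ works for a single mode (this is exactly the paper's counterexample $\cosh(x_1\sqrt{\lambda})\,\phi(x_2,\dots,x_N)$ read backwards), but with infinitely many growing modes nothing a priori excludes cancellations at those special points; the ``mode-by-mode peeling'' is precisely the unproven step, and the streamline analysis does not supply it. Indeed, what Section~\ref{sec2} gives is only a one-sided bound (uniform divergence of $\min_{\overline{\omega}}\psi(x_1,\cdot)$), whereas eliminating the growing modes --- say by testing the slice integrals $\int_\omega\psi(x_1,\cdot)\phi_k$ against a sign-changing $\phi_k$ as $x_1\to+\infty$ --- requires an \emph{upper} bound on the growth of $\psi$, which you never establish. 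This missing two-sided control is exactly what the paper's Section~\ref{sec3} produces: comparison from below with affine functions $-M+\alpha x_1$ via the maximum principle and the Hopf lemma, then the Harnack inequality applied to the nonnegative harmonic function $\psi(x)-(-M+ax_1)$ to get the matching upper bound, after which $\psi-ax_1$ is bounded and a Liouville-type argument concludes. To salvage your route you would need an ingredient of this kind (e.g.\ Harnack on slices, or interior gradient estimates combined with \eqref{hyp2}) before any mode can be eliminated; note also that the paper only assumes $\omega$ of class $C^1$, for which Neumann eigenfunctions need not be $C^1$ up to $\partial\omega$, so your boundary-extremum step needs extra regularity that the theorem does not grant.
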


Theorem~\ref{th2} clearly implies Theorem~\ref{th1}. Conversely, if $v$ is an incompressible $C^1(\Omega,\R^N)\cap C(\overline{\Omega},\R^N)$ vector field satisfying~\eqref{hyp1} and $v\cdot n=0$ on~$\partial\Omega$, then $v$ obeys the Euler and Navier-Stokes equations~\eqref{euler} and~\eqref{ns} with pressure $p=-|v|^2/2=-|\nabla\psi|^2/2$ up to additive constants (the Bernoulli function $p+|v|^2/2$ is constant). Thus, Theorems~\ref{th1} and~\ref{th2} are equivalent, and are therefore equivalent to the following result of independent interest reformulated in terms of harmonic functions (despite the simplicity of the statement, we are not aware of such a result in the literature, up to our knowledge):

\begin{theorem}\label{th3}
Let $\psi$ be a $C^2(\Omega,\R)\cap C^1(\overline{\Omega},\R)$ harmonic function\footnote{Notice that from its harmonicity, the function $\psi$ is actually real-analytic in~$\Omega$.} such that $\nabla\psi\cdot n=0$ on $\partial\Omega$ and $\inf_{\overline{\Omega}}|\nabla\psi|>0$. Then there are $a\in\R^*$ and $b\in\R$ such that
$$\psi(x)=a\,x_1+b\ \hbox{ for all }x=(x_1,\cdots,x_N)\in\overline{\Omega}.$$
\end{theorem}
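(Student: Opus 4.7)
\emph{Streamline preliminaries.} My plan is to combine an ODE analysis of the gradient flow $\dot X(t)=\nabla\psi(X(t))$ with a Liouville-type PDE argument for $\partial_1\psi$. I first observe that $v=\nabla\psi$ is $C^1$ on $\overline{\Omega}$ and tangent to $\partial\Omega$ (since $\nabla\psi\cdot n=0$), so each maximal trajectory stays in $\overline{\Omega}$. Along any trajectory, $\tfrac{d}{dt}\psi(X(t))=|\nabla\psi|^2\ge c^2>0$, so $\psi$ is strictly increasing with linear lower bound $c^2 t$. Because $\psi$ is continuous and bounded on every compact slab $[-R,R]\times\overline{\omega}$, no trajectory can remain in such a slab, and the decomposition of $\overline{\Omega}\setminus([-R,R]\times\overline{\omega})$ into two connected components (for $R$ large) forces $x_1(X(t))$ to have a definite limit in $\{\pm\infty\}$ at each end of its maximal interval. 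I also record a useful identity: the divergence theorem applied to the div-free $v$ on a slab $[a,b]\times\omega$, combined with $v\cdot n=0$ on the lateral boundary, gives that the flux $F:=\int_{\omega}\partial_1\psi(x_1,\cdot)$ is independent of $x_1$.

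\emph{Key ODE step (main obstacle).} After possibly replacing $\psi$ by $-\psi$, I would like to show that every trajectory connects $x_1=-\infty$ (as $t\to-\infty$) to $x_1=+\infty$ (as $t\to+\infty$). The hard part will be to rule out \emph{returning} trajectories, i.e.\ those with $x_1\to+\infty$ at both ends of the maximal interval. At a local minimum of $x_1(\cdot)$ along such a trajectory $\partial_1\psi=0$ and the non-stagnation condition forces $|\nabla_{x'}\psi|\ge c$; converting this local information into a global contradiction is the main obstacle. I expect the argument to combine the invariance of $F$ with a Hopf-type lemma applied to the harmonic-Neumann function $\partial_1\psi$ near such a local minimum. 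Once returning trajectories are excluded, $\partial_1\psi$ has constant sign along each streamline; since the streamlines foliate $\overline{\Omega}$ by non-stagnation, this will yield $\partial_1\psi\ge 0$ throughout $\overline{\Omega}$.

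\emph{Liouville-type PDE step and conclusion.} With $u:=\partial_1\psi\ge 0$ in hand, I will use that $u$ is harmonic on $\Omega$ ($\Delta u=\partial_1\Delta\psi=0$) with Neumann boundary conditions ($\partial_n u=\partial_1(\nabla\psi\cdot n)=0$ on $\partial\Omega=\R\times\partial\omega$), and conclude via a Liouville-type theorem: every non-negative harmonic Neumann function on the infinite cylinder $\Omega$ is constant. The proof uses the cross-sectional eigenbasis $\{\phi_k\}_{k\ge 0}$ of $-\Delta_{x'}$ on $\omega$ (with $\lambda_0=0$, $\phi_0$ constant, and $\lambda_k>0$, $\phi_k$ sign-changing for $k\ge 1$): the zeroth mode $u_0(x_1)=A_0+B_0 x_1$ must be non-negative on $\R$, forcing $B_0=0$; a non-trivial mode $(A_k e^{\sqrt{\lambda_k}x_1}+B_k e^{-\sqrt{\lambda_k}x_1})\phi_k$ with $k\ge 1$ would drive $u$ to $-\infty$ at a sign-changing point of $\phi_k$ as $x_1\to\pm\infty$, contradicting $u\ge 0$. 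Hence $u\equiv a$; non-stagnation then gives $a>0$. Writing $\psi(x_1,x')=ax_1+f(x')$, harmonicity and Neumann reduce to $\Delta_{x'}f=0$ in $\omega$ with $\partial_{n'}f=0$ on $\partial\omega$, and integration by parts on the bounded set $\omega$ gives $\int_\omega|\nabla_{x'}f|^2=0$, so $f\equiv b$; therefore $\psi(x)=ax_1+b$.
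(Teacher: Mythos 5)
Your outline stalls exactly at the step you flag as ``the main obstacle,'' and that step is not a technicality: it is the heart of the theorem, and the local argument you anticipate (Hopf lemma for $\partial_{x_1}\psi$ near a point where $x_1$ has a local minimum along a streamline, combined with the invariance of the flux $F$) does not obviously lead anywhere, because a \emph{single} returning streamline is not locally contradictory. The paper's exclusion is intrinsically global: it partitions $\overline{\Omega}$ into the four sets of points whose streamlines go to $(+\infty,+\infty)$, $(-\infty,-\infty)$, $(\mp\infty,\pm\infty)$ in the $x_1$-direction, proves each set is relatively open (via Cauchy--Lipschitz continuity with respect to initial data and the monotonicity of $\psi$ along streamlines, measured against $M:=\max_{\overline\omega}|\psi(0,\cdot)|$), concludes by connectedness that \emph{all} streamlines are of the same type, and only then kills the two ``returning'' alternatives by the quantitative inequality $\psi(\xi_x(\tau'))-\psi(\xi_x(\tau))\ge\eta\,|\xi_x(\tau')-\xi_x(\tau)|$: a returning streamline through a point with $x_1\ll0$ must cross $\{x_1=0\}$ twice, giving $2M\ge2\eta|x_1|$, which fails as $x_1\to-\infty$. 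Nothing of this kind appears in your proposal, so the pivotal claim is simply missing.

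There is a second, independent gap: even granting that every streamline runs from $x_1=-\infty$ to $x_1=+\infty$, your deduction that $\partial_{x_1}\psi$ ``has constant sign along each streamline,'' hence $\partial_{x_1}\psi\ge0$ in $\overline\Omega$, is a non sequitur. Having $x_1(X(t))\to\mp\infty$ at the two ends does not make $t\mapsto x_1(X(t))$ monotone; $\dot x_1=\partial_{x_1}\psi$ may a priori change sign along the way, so the nonnegativity you feed into your Liouville step is unjustified (it is only known a posteriori, once the theorem is proved). The paper avoids this entirely: from the streamline analysis it extracts only that $\psi(x_1,\cdot)\to\pm\infty$ uniformly as $x_1\to\pm\infty$, and then works with $\psi$ itself --- comparison with affine functions $\alpha x_1-M$, the maximum principle and Hopf lemma in truncated cylinders, and the Harnack inequality --- to show $\psi-ax_1$ is bounded and then constant. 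Your final Liouville step also has secondary issues (the Neumann condition $\partial_n(\partial_{x_1}\psi)=0$ is only formal since $\psi$ is merely $C^1(\overline\Omega)$, and the mode-by-mode eigenfunction argument needs justification for a $C^1$ section and an a priori unbounded $u$), but the two gaps above are the substantive ones: as written, the proof does not go through.
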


We point out that Theorems~\ref{th1}-\ref{th3} hold in any dimension $N\ge 2$. In dimension $N=2$, it turns out that the conclusions of Theorems~\ref{th1}-\ref{th3} could also be derived independently from~\cite[Theorem~1.1]{hn1}. Indeed, first of all, under the assumptions of Theorems~\ref{th1}-\ref{th3}, in dimension $N=2$, the vector field $v$ is of class $C^\infty(\overline{\Omega},\R^2)$, see Section~\ref{sec4}. Then, even without the condition~\eqref{hyp1},~\cite[Theorem~1.1]{hn1} implies that $v$ is a parallel flow, that is, $v(x_1,x_2)=(v_1(x_2),0)$ for all $(x_1,x_2)\in\overline{\Omega}$, and the assumption~\eqref{hyp1} finally entails that $v$ is constant. As a matter of fact, for the specific case of dimension $N=2$, we provide in Section~\ref{sec4} an alternate proof of Theorems~\ref{th1}-\ref{th3}, which does not use~\cite{hn1} and is completely different from the proof given in Sections~\ref{sec2}-\ref{sec3} below for the general case $N\ge 2$. 

In any dimension $N\ge 2$, if $\omega$ is simply connected, that is, if $\Omega$ is simply connected, the Schwarz theorem and Poincar\'e lemma entail that the assumption~\eqref{hyp1} is equivalent to say that the vorticity of the flow $v$ (the antisymmetric part of the Jacobian matrix of~$v$) identically vanishes, that is, the velocity field is irrotational. Therefore, in a simply connected infinite cylinder~$\Omega$ in dimension $N=3$, Theorems~\ref{th1}-\ref{th2} equivalently mean that any incompressible $C^1(\Omega,\R^3)\cap C(\overline{\Omega},\R^3)$ vector field $v$ away from stagnation satisfying $v\cdot n=0$ on~$\partial\Omega$ and $\curl v=(0,0,0)$ is necessarily identically equal to $(a,0,0)$ in $\overline{\Omega}$ for some $a\in\R^*$.

Lastly, an interesting corollary of the main results is that any unbounded potential flow $v\in C^1(\Omega,\R^N)\cap C(\overline{\Omega},\R^N)$ of the Euler or Navier-Stokes equations~\eqref{euler} or~\eqref{ns}, or more generally any unbounded incompressible potential vector field $v\in C^1(\Omega,\R^N)\cap C(\overline{\Omega},\R^N)$ satisfying $v\cdot n=0$ on~$\partial\Omega$, must have a stagnation point in $\overline{\Omega}$ or at infinity, in the sense that $\inf_{\overline{\Omega}}|v|=0$.

\subsubsection*{Counter-examples without the main assumptions}

The conclusions of Theorems~\ref{th1}-\ref{th3} do not hold in general without the assumptions~\eqref{hyp1}-\eqref{hyp2} or without the incompressibility condition $\div v=0$. Let us explain why in more details in the following paragraphs.

First of all, one can not get rid of the condition~\eqref{hyp2} on $\inf_{\overline{\Omega}}|v|>0$ in Theorems~\ref{th1}-\ref{th2} (notice nevertheless that $v$ is never assumed to be bounded, even if the conclusion implies that it is~so!) or the condition $\inf_{\overline{\Omega}}|\nabla\psi|>0$ in Theorem~\ref{th3}. Assume for instance that~$\omega$ is of class $C^2$ and consider any eigenfunction $\phi\in C^\infty(\omega,\R)\cap C^1(\overline{\omega},\R)$ of the Laplace operator in $\omega$ with Neumann boundary conditions, that is, $\Delta\phi+\lambda\phi=0$ in $\omega$ with~$\nabla\phi\cdot n=0$ on~$\partial\omega$, associated with a positive eigenvalue $\lambda>0$ (notice that the function $\phi$ is therefore non-constant and that the $C^2$ smoothness of $\omega$ guarantees that $\phi$ is necessarily of class~$W^{2,p}(\omega,\R)$ for all $p\in[1,\infty)$, and thus at least of class $C^1$ up to the boundary). Setting
$$\psi(x)=\psi(x_1,\cdots,x_N):=\cosh\big(x_1\sqrt{\lambda}\big)\,\phi(x_2,\cdots,x_N),$$
the vector field $v:=\nabla\psi$ is then a non-constant $C^\infty(\Omega,\R^N)\cap C(\overline{\Omega},\R^N)$ solution of~\eqref{euler}-\eqref{ns}, with pressure $p:=-|v|^2/2$, such that $|v(0,x')|=0$ for any critical point $x'\in\overline{\omega}$ of $\phi$ (such a critical point $x'\in\overline{\omega}$ necessarily exists, take for instance a maximal point $x'$ of $\phi$ in $\overline{\omega}$: if $x'\in\omega$, then $|\nabla\phi(x')|=0$ and if $x'\in\partial\omega$, then the tangential derivatives of $\phi$ at $x'$ are all zero, as is the normal derivative by definition of $\phi$, hence $|\nabla\phi(x')|=0$). We point out that the condition $\inf_{\overline{\Omega}}|v|>0$ can not be relaxed into $|v|>0$ in $\overline{\Omega}$ either: for instance, in $\Omega:=\R\times(0,1)$, the vector field $v:=\nabla\psi$, with potential $\psi(x_1,x_2):=e^{\pi x_1}\cos(\pi x_2)$, is a non-constant $C^\infty(\overline{\Omega},\R^2)$ solution of~\eqref{euler}-\eqref{ns}, with pressure $p:=-|v|^2/2$, such that $\inf_{\overline{\Omega}}|v|=0$ and $|v|>0$ in $\overline{\Omega}$. 

Similarly, the conclusions of Theorems~\ref{th1}-\ref{th2} clearly do not hold in general without the incompressibility condition $\div v=0$, and Theorem~\ref{th3} clearly does not hold in general without the harmonicity of $\psi$. For instance, for any $\alpha\in(-1,1)$, the vector field $v:=\nabla\psi$, with
$$\psi(x_1,\cdots,x_N):=x_1+\alpha\ln(x_1^2+1),$$
is a non-incompressible and non-constant $C^\infty(\overline{\Omega},\R^N)$ solution of the Euler equations $v\cdot\nabla v+\nabla p=0$ in $\overline{\Omega}$ and $v\cdot n=0$ on $\partial\Omega$, with pressure $p:=-|v|^2/2$, such that $\inf_{\overline{\Omega}}|v|=1-|\alpha|>0$, that is,~\eqref{hyp1}-\eqref{hyp2} hold. Furthermore, it is easy to cook up to some counter-examples which depend on the variables $(x_2,\cdots,x_N)$ as well. For instance, assume that $\omega$ is of class $C^2$, and consider a bounded and locally H\"older-continuous function $f$ with zero average over $\omega$, and let $\phi$ be the unique $C^2(\omega)\cap C^1(\overline{\omega})$ solution of $\Delta\phi=f$ in~$\omega$ with $\nabla\phi\cdot n=0$ on $\partial\omega$ and zero average over $\omega$. Then, for all $\epsilon>0$ small enough, the vector field $v_\varepsilon:=\nabla\psi_\varepsilon$ with
$$\psi_\varepsilon(x):=x_1+\alpha\ln(x_1^2+1)+\varepsilon\phi(x_2,\cdots,x_N)$$
is a non-incompressible and non-constant $C^1(\Omega,\R^N)\cap C(\overline{\Omega},\R^N)$ solution of the Euler equations $v_\varepsilon\cdot\nabla v_\varepsilon+\nabla p_\varepsilon=0$ in $\Omega$ and $v_\varepsilon\cdot n=0$ on $\partial\Omega$, with pressure $p_\varepsilon:=-|v_\varepsilon|^2/2$, such that $\inf_{\overline{\Omega}}|v_\varepsilon|>0$. However, as follows from Section~\ref{sec2} below, we point out that, for any function $\varphi\in C^2(\Omega,\R)\cap C^1(\overline{\Omega},\R)$ such that $\nabla\varphi\cdot n=0$ on $\partial\Omega$ and $\inf_{\overline{\Omega}}|\nabla\varphi|>0$, then necessarily either $\varphi(x)\to\pm\infty$ as $x_1\to\pm\infty$, or $\varphi(x)\to\mp\infty$ as $x_1\to\pm\infty$, uniformly with respect to $(x_2,\cdots,x_N)\in\overline{\omega}$, even if $\varphi$ is not harmonic (that is, even if the vector field $\nabla\varphi$ is not divergence-free).

The assumption~\eqref{hyp1} saying that $v=\nabla\psi$ is a potential flow can not be relaxed either. For instance, say in $\Omega:=\R\times(0,1)$ in dimension $N=2$, for any non-vanishing and non-constant function $v_1\in C^1((0,1),\R)\cap C([0,1],\R)$, the incompressible non-constant parallel flow
\be\label{parallel2}
v(x_1,x_2):=(v_1(x_2),0)\ee
is a $C^1(\Omega,\R^2)\cap C(\overline{\Omega},\R^2)$ solution of~\eqref{euler} with constant pressure, that fulfills~\eqref{hyp2} but not~\eqref{hyp1} (its scalar vorticity is not identically zero, as $v_1$ is not constant). Let us provide another counter-example, in dimension $N=3$, with $\Omega:=\R\times D$, where $D$ is the unit Euclidean disk of $\R^2$ centered at the origin. The incompressible helicoidal flow $v$ defined by
$$v(x_1,x_2,x_3):=(1,-x_3,x_2)$$
is a $C^\infty(\overline{\Omega},\R^3)$ solution of~\eqref{euler}-\eqref{ns} with pressure $p(x_1,x_2,x_3):=(x_2^2+x_3^2)/2$, that fulfills~\eqref{hyp2} but not~\eqref{hyp1} (its vorticity is equal to the non-zero vector field $(2,0,0)$), and~$v$ is not constant and it is not parallel to the direction $x_1$ either. 

\begin{remark}
We point out that the conclusions of Theorems~$\ref{th1}$-$\ref{th3}$ do not hold in general if the cylinder $\Omega=\R\times\omega$ has an unbounded section~$\omega$. For instance, consider the case of the whole space $\Omega=\R^N$ with $N\ge3$, and the vector field $v:=\nabla\psi\in C^\infty(\R^N,\R^N)$, with
$$\psi(x):=x_1x_2+x_3$$
for all $x\in\R^N$. The vector field $v$ solves~\eqref{euler}-\eqref{ns} with pressure $p:=-|v|^2/2$, it satisfies~\eqref{hyp1}-\eqref{hyp2}, but it is not constant, and is even not a parallel flow. This example holds in dimensions $N\ge3$. In $\R^2$, the vector field $v:=\nabla\psi$ with $\psi(x):=x_1x_2$ does not fulfill~\eqref{hyp2}. Actually, in dimension $N=2$, if a $C^1(\R^2,\R^2)$ vector field satisfies~\eqref{euler} or~\eqref{ns} together with~\eqref{hyp1}-\eqref{hyp2} $($then it is automatically of class $C^\infty(\R^2,\R^2)$ since each of its components is harmonic$)$ and if it is further assumed to be bounded, namely $v\in L^\infty(\R^2,\R^2)$, then it follows from~{\rm{\cite[{\it{Theorem}}~$1.1$]{hn3}}} that~$v$ is a shear flow, that is, there is a $C^\infty(\R,\R)$ function $V$ and a unit vector $e=(e_1,e_2)$ such that $v(x)=V(x\cdot e^\perp)\,e$ for all $x\in\R^2$, with~$e^\perp:=(-e_2,e_1)$, and condition~\eqref{hyp1} finally gives that $v$ is constant.
\end{remark}

\subsubsection*{Comments on rigidity and Liouville-type results for the Euler or Navier-Stokes equations in the literature}

In infinite cylinders, the first rigidity result showed that the solutions $v=(v_1,v_2)$ of~\eqref{euler} in the two-dimensional strip $\Omega=\R\times(0,1)$ satisfying $v_1\neq0$ are necessarily parallel flows of the type~\eqref{parallel2}~\cite{k}. The same conclusion~\eqref{parallel2} also holds for the Euler equations~\eqref{euler} under the non-stagnation condition~\eqref{hyp2}~\cite{hn1}, and for the Boussinesq equations~\cite{cdg} and the hydrostatic Euler equations~\cite{lwx}. The uniqueness of Poiseuille flows such that $|v|=0$ on~$\partial\Omega=\R\times\{0,1\}$, $v(x_1,x_2)\to(x_2(1-x_2),0)$ as $x_1\to-\infty$ and $v_1>0$ in $\Omega=\R\times(0,1)$ was proven in~\cite{llsx}, namely such flows are identically equal to $(x_2(1-x_2),0)$. The uniqueness of Euler flows satisfying a prescribed upstream value as $x_1\to-\infty$ is valid more generally in non-straight infinite two-dimensional nozzles~\cite{llsx} (but non-empty stagnation regions can occur without the condition $v_1>0$~\cite{llsx}) and in three-dimensional axisymmetric nozzles~\cite{dl}. Rigidity and non-rigidity results for shear flows in two-dimensional domains have been obtained for the steady or unsteady Euler and Navier-Stokes equations~\cite{czew2,gxx} (for further stability results of Euler or Navier-Stokes shear flows with monotone profiles, see e.g.~\cite{bgm,bm,czew1,gnrs,ij,ke,z1}). Rigidity results for parallel flows of the Euler equations under the non-stagnation condition~\eqref{hyp2} also hold in two-dimensional half-planes~\cite{hn1,hn2} and in the whole plane~\cite{hn3}. On the other hand, bounded non-parallel flows such that~$|v|>0$ in~$\overline{\Omega}$ but $\inf_{\overline{\Omega}}|v|=0$ exist in the plane, a half-plane or a two-dimensional straight strip~\cite{drr}. Furthermore, if a flow $v$ satisfying some growth condition at infinity in $\R^2$ is not parallel, then the set of its directions $v(x)/|v(x)|$ is necessarily equal to the unit circle~$\mathbb{S}^1$~\cite{gxx}, and this set can also be equal to the half-positive or the half-negative unit circle if the flow is defined in the half-plane $\R\times[0,+\infty)$ or in the strip $\R\times[0,1]$~\cite{gxx}. The rigidity of flows depending only on the radial variable in the class of axisymmetric flows set in hollowed out three-dimensional cylinders $\R\times\omega$ with two-dimensional annular sections $\omega$ has been shown in~\cite{cdg}, together with flexibility results on the existence of non-shear flows in infinite two-dimensional cylinders with approximately straight boundary or non-radially-symmetric flows in hollowed out three-dimensional cylinders with two-dimensional nearly annular sections. Notice that almost all the rigidity results in the references mentioned in this paragraph are concerned with flows in dimension $N=2$, or in some three-dimensional specific domains with symmetries.

In bounded two-dimensional annuli
$$\Omega_{a,b}:=\big\{x\in\R^2:a<|x|<b\big\}$$
with $0<a<b<\infty$, Euler flows have been proved to be circular flows, that is, their streamlines are concentric circles, provided they have no stagnation point in $\overline{\Omega_{a,b}}$~\cite{hn4} or just in~$\Omega_{a,b}$~\cite{wz}. The same radial symmetry conclusion holds in complements of disks with some conditions at infinity and the punctured plane with some conditions at the center and at infinity~\cite{hn4}, and in punctured disks with some conditions at the center~\cite{hn4} or in disks with only one interior stagnation point~\cite{wz} (in a bounded convex domain, the uniqueness of the interior stagnation point is satisfied if the flow is stable in the sense of Arnold~\cite{n1}). Stability results of circular Euler flows in annuli have been shown in~\cite{z2}. Lastly, the structure of the set of flows without stagnation point and whose vorticity has no critical point in general doubly connected sets has been analysed in~\cite{cs}. 

For the time-dependent version of the Navier-Stokes equations in the whole plane $\R^2$, it has been proved in~\cite{knss} that the ancient solutions defined $\{(t,x):t<0,\, x\in\R^2\}$ with bounded velocity are necessarily independent of $x$. The same conclusion holds for the bounded axisymmetric solutions without swirl in $(-\infty,0)\times\R^3$~\cite{knss}. In particular, solutions of~\eqref{ns} with bounded vorticity in $\R^2$, or bounded axisymmetric solutions without swirl in~$\R^3$, are necessarily constant. In the slab $\Omega=\R^2\times(0,1)$, then the solutions of the steady Navier-Stokes equations~\eqref{ns} with no-slip boundary conditions $v=0$ on $\partial\Omega$ are trivial if the Dirichlet integral $\int_{|(x_1,x_2)|<R,\, 0<x_3<1}|\nabla v|^2$ does not grow too fast as $R\to+\infty$, or if $|v|$ and its radial component do not grow too fast as $|(x_1,x_2)|\to+\infty$~\cite{bgwx}. In the slab $\R^2\times\mathbb{T}$ with periodic boundary conditions in the variable $x_3$, the solutions of the steady Navier-Stokes equations are constant parallel to the $x_3$-axis if the radial or swirl components are axisymmetric with respect to the $x_3$-axis, or if the radial component decays faster than $1/|(x_1,x_2)|$ as $|(x_1,x_2)|\to+\infty$, and the solutions are also constant if $\|\,|v|\,\|_{L^\infty(\R^2\times\mathbb{T})}<2\pi$~\cite{bgwx}. The triviality of steady Navier-Stokes flows $v$ in $\R^N\setminus\{0\}$ such that $|v(x)|=O(1/|x|)$ in $\R^N\setminus\{0\}$ is shown in~\cite{bglwx} if $N\ge4$. As an example of a result on the asymptotic dynamics for the unsteady Navier-Stokes equations, we just mention here the large-time convergence in self-similar variables to the Oseen's vortices in $\R^2$ for a large class of initial conditions~\cite{gw}.

Rigidity can also refer to situations where the domain $\Omega$ itself is not given a priori, or when the support of a solution defined in $\R^N$ is not known a priori. Rigidity results for the domain $\Omega\subset\R^2$, namely its radial symmetry, with overdetermined boundary conditions (such as $|v|$ constant on each connectivity component of $\Omega$) has been shown for the Euler equations~\eqref{euler}, in doubly connected domains without interior stagnation point, or in simply connected domains with only one interior stagnation,~\cite{hn4,r1,wz} (the conclusion in simply connected domains does not hold in general without the uniqueness of the interior stagnation point~\cite{r2}). The radial symmetry of a compactly supported flow $v\in C^1(\R^2,\R^2)$ solving~\eqref{euler} in $\R^2$ has also been proved if the open set $\{x:|v(x)|>0\}$ is assumed to be doubly connected~\cite{r1} (however non-circular compactly supported Lipschitz-continous or $C^k(\R^2)$ flows whose supports are close to radially symmetric annuli  exist~\cite{efr,efrs}). For the vorticity patch problem for which the, scalar, vorticity is assumed to be the indicator function of a set $D\subset\R^2$, then $D$ is necessarily a disk and the flow is circular~\cite{f,gpsy} (see also~\cite{gpsy,h,hmv} for symmetry results of rotating patches). The flow $v:\R^2\to\R^2$ is also circular if its vorticity is smooth, nonnegative and compactly supported~\cite{gpsy}, but the conclusion is false in general for sign-changing vorticity~\cite{gps}. In dimension $N=3$, Beltrami flows (for which $v\times \curl v=0$) of the Euler equations~\eqref{euler} in $\R^3$ are necessarily $0$ if they have compact support~\cite{n2} or if they have finite energy or decay fast enough at infinity~\cite{cc}. This Liouville-type conclusion also holds for axisymmetric flows with finite energy, no swirl, trivial limit at infinity and constant pressure at infinity~\cite{jx}. Nevertheless, there still exist non-trivial compactly supported, and axisymmetric, flows in $\R^3$ whose support is a torus with almost circular section~\cite{clv,dep,g}. Actually, the structure of steady Euler flows in $\R^3$ can be in general quite complex, and, even for Beltrami flows, thin vortex tubes of any link and knot type exist in general~\cite{ep1,ep2}.

\subsubsection*{Elements of the proofs and outline of the paper}

The proofs of Theorems~\ref{th1}-\ref{th3} rely first on the study of the geometric properties of the streamlines of a potential flow $v=\nabla\psi$ under the assumption~\eqref{hyp2}. Namely, we show in Section~\ref{sec2}, with ODE-type arguments, that these streamlines all go from one end to the other end of the cylinder $\Omega=\R\times\omega$, and that either $\psi(x_1,\cdot)\to\pm\infty$ as $x_1\to\pm\infty$ uniformly in $\overline{\omega}$, or $\psi(x_1,\cdot)\to\mp\infty$ as $x_1\to\pm\infty$ uniformly in $\overline{\omega}$. Section~\ref{sec3} is based on PDE-type arguments, comparison with suitable auxiliary affine functions, maximum principles and the Harnack inequality, using this time the incompressibility condition of $v$, that is, the harmonicity of $\psi$. We underline that the proofs hold in any dimension $N$ and do not use any symmetry or topological properties (such as simple or double connectivity) of the section $\omega$. However, we provide in Section~\ref{sec4} a completely different proof based on PDE arguments only, in the particular case of dimension $N=2$.

%%%%%%%%%%%%%%%%%%%%%%%%%%%%%%%%%%%%%%%%%%%%%%%%%%%%%%%%%%
%%%%%%%%%%%%%%%%%%%%%%%%%%%%%%%%%%%%%%%%%%%%%%%%%%%%%%%%%%

\section{Properties of the streamlines}\label{sec2}

We assume here that $v=\nabla\psi$ is a $C^1(\Omega,\R^N)\cap C(\overline{\Omega},\R^N)$ vector field satisfying $v\cdot n=0$ on~$\partial\Omega$ and $\inf_{\overline{\Omega}}|v|>0$ (but here we do not assume the incompressibility condition). For~$x\in\overline{\Omega}$, we denote $\xi_x$ the solution of
$$\left\{\baa{rcl}
\xi_x'(\tau) & = & v(\xi_x(\tau)),\vspace{3pt}\\
\xi_x(0) & = & x.\eaa\right.$$
From the Cauchy-Lipschitz theorem and the tangential boundary condition $v\cdot n=0$ on $\partial\Omega$, each $\xi_x$ is defined in a maximal open interval $I_x:=(\tau^-_x,\tau^+_x)\subset\R$ with $-\infty\le\tau^-_x<0<\tau^+_x\le+\infty$. Furthermore, the streamlines
$$\Xi_x:=\xi_x(I_x)$$
are pairwise disjoint, and $\Xi_x\subset\Omega$ (resp. $\Xi_x\subset\partial\Omega$) if $x\in\Omega$ (resp. $x\in\partial\Omega$). It is also known that the streamlines are analytic curves~\cite{kn}.

The first auxiliary lemma shows the divergence to infinity of every streamline~$\Xi_x$ at its ends.

\begin{lemma}\label{lem1}
Let $v=\nabla\psi$ be a $C^1(\Omega,\R^N)\cap C(\overline{\Omega},\R^N)$ vector field satisfying $v\cdot n=0$ on~$\partial\Omega$ and
$$\eta:=\inf_{\overline{\Omega}}|v|>0.$$
Then, for each $x\in\overline{\Omega}$,
$$|\xi_x(\tau)|\to+\infty\hbox{ as $\tau\to\tau^-_x$ and as $\tau\to\tau^+_x$},$$
and $\psi(\xi_x(\tau))\to\pm\infty$ as $\tau\to\tau^\pm_x$.
\end{lemma}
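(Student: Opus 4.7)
The plan is to exploit the chain rule identity
$$\frac{d}{d\tau}\psi(\xi_x(\tau)) \,=\, \nabla\psi(\xi_x(\tau))\cdot v(\xi_x(\tau)) \,=\, |v(\xi_x(\tau))|^2 \,\geq\, \eta^2 \,>\, 0,$$
available precisely because $v=\nabla\psi$ makes every streamline a gradient line of $\psi$. Thus $\psi\circ\xi_x$ is strictly increasing with derivative bounded below by $\eta^2$; integrating gives $\psi(\xi_x(\tau))\geq\psi(x)+\eta^2\tau$ for $\tau>0$ and $\psi(\xi_x(\tau))\leq\psi(x)+\eta^2\tau$ for $\tau<0$.

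For the limit $\tau\to\tau_x^+$ I would split into two cases. If $\tau_x^+=+\infty$, the lower bound above immediately yields $\psi(\xi_x(\tau))\to+\infty$; and then $|\xi_x(\tau)|\to+\infty$ because $\psi$ is continuous on $\overline{\Omega}$ and hence bounded on bounded subsets, which rules out any bounded subsequence $\xi_x(\tau_n)$. If instead $\tau_x^+<+\infty$, the standard blow-up alternative for the maximal Cauchy--Lipschitz problem (which, as the paper points out, is applicable in $\overline{\Omega}$ thanks to the tangential condition $v\cdot n=0$) forces $\xi_x(\tau)$ to leave every compact subset of $\overline{\Omega}$; since $\omega$ is bounded this is equivalent to $|\xi_x(\tau)|\to+\infty$. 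The arc-length $L(\tau):=\int_0^\tau|v(\xi_x(s))|\,ds$ dominates $|\xi_x(\tau)-x|$ and so also tends to $+\infty$, and the pointwise bound $|v|^2\geq\eta|v|$ gives
$$\psi(\xi_x(\tau))-\psi(x) \,=\, \int_0^\tau |v(\xi_x(s))|^2\,ds \,\geq\, \eta\, L(\tau) \,\longrightarrow\, +\infty.$$
The limit $\tau\to\tau_x^-$ is handled symmetrically, for instance by applying the same reasoning to the time-reversed trajectory $\tilde\xi(\tau):=\xi_x(-\tau)$, which is an integral curve of $-v=\nabla(-\psi)$; this yields $|\xi_x(\tau)|\to+\infty$ and, because $\psi\circ\xi_x$ is increasing in $\tau$, $\psi(\xi_x(\tau))\to-\infty$.

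The step I expect to be the main obstacle is the finite-time case $\tau_x^+<+\infty$, where one needs the blow-up alternative \emph{up to the boundary of} $\Omega$. At interior points this is immediate from the local Lipschitz regularity of $v$, while at boundary points it relies on the tangential condition ensuring that trajectories starting on $\partial\Omega$ stay there with unique solvability; the paper has folded this into its preliminary Cauchy--Lipschitz statement, so I would simply invoke that framework rather than re-prove it. Once the blow-up alternative is granted, the remainder of the argument is just the monotonicity identity above combined with the elementary length estimate.
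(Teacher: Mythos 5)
Your proof is correct and essentially the same as the paper's: both rest on the identity $\frac{d}{d\tau}\psi(\xi_x(\tau))=|v(\xi_x(\tau))|^2\ge\eta^2$, the pointwise bound $|v|^2\ge\eta\,|\xi_x'|$ giving $\psi(\xi_x(\tau'))-\psi(\xi_x(\tau))\ge\eta\,|\xi_x(\tau')-\xi_x(\tau)|$, and, for a finite maximal time, the uniform local existence time from Cauchy--Lipschitz in the invariant closed domain $\overline{\Omega}$, which is exactly the escape-from-compacts alternative you invoke and which the paper proves on the spot. Your case split on $\tau_x^\pm$ finite or infinite is merely an organizational variant of the paper's single contradiction argument; just note that the limit $\psi(\xi_x(\tau))\to-\infty$ as $\tau\to\tau_x^-$ comes from the time-reversed growth and length estimates (which your reversal setup does provide), not from the monotonicity of $\psi\circ\xi_x$ alone.
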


\begin{proof}
Consider any $x$ in $\overline{\Omega}$. Let us prove that $|\xi_x(\tau)|\to+\infty$ as $\tau\to\tau^-_x$ (the limit as~$\tau\to\tau^+_x$ can be handled similarly). Assume by way of contradiction that the conclusion does not hold. There exist then a sequence $(\tau_n)_{n\in\N}$ in $I_x=(\tau^-_x,\tau^+_x)$ and a point $\xi\in\overline{\Omega}$ such that
$$\tau_n\to\tau^-_x\ \hbox{ and }\ \xi_x(\tau_n)\to\xi\ \hbox{ as $n\to+\infty$}.$$
Observe that the real-valued function $g_x$ defined in $I_x$ by
\be\label{defgx}
g_x(\tau):=\psi(\xi_x(\tau))
\ee
is of class $C^1(I_x)$ and satisfies
$$g_x'(\tau)=\nabla\psi(\xi_x(\tau))\cdot\xi'_x(\tau)=|v(\xi_x(\tau))|^2\ge\eta^2$$
for all $\tau\in I_x$, hence $g_x(\tau)\to-\infty$ as $\tau\to\tau^-_x$ if $\tau^-_x=-\infty$. But since $g_x(\tau_n)=\psi(\xi_x(\tau_n))\to\psi(\xi)$ and $\tau_n\to\tau^-_x$ as $n\to+\infty$, one infers that $\tau^-_x\neq-\infty$, that is,~$\tau^-_x\in(-\infty,0)$. Now, from the Cauchy-Lipschitz theorem, there are $\sigma>0$ and $r>0$ such that
$$[-\sigma,\sigma]\subset I_y\hbox{ for all $y\in\overline{\Omega}$ such that $|y-\xi|<r$}.$$
In particular, $[\tau_n-\sigma,\tau_n+\sigma]\subset I_x$ for all $n$ large enough (for which $|\xi_x(\tau_n)-\xi|<r$), due to the maximality of the interval $I_x=(\tau^-_x,\tau^+_x)$. Hence, $\tau_n-\sigma>\tau^-_x$ for all $n$ large enough, and the limit as $n\to+\infty$ contradicts the finiteness of $\tau^-_x$ and the positivity of $\sigma$. As a consequence,
$$|\xi_x(\tau)|\to+\infty\hbox{ as $\tau\to\tau^-_x$ (and as $\tau\to\tau^+_x$, similarly)}.$$

Now, since $g_x'(\tau)=|v(\xi_x(\tau))|^2\ge\eta\,|\xi_x'(\tau)|$ for every $\tau\in(\tau^-_x,\tau^+_x)$, one gets that
\be\label{psitautau'}\baa{rcl}
\displaystyle\psi(\xi_x(\tau'))-\psi(\xi_x(\tau))=g_x(\tau')-g_x(\tau)\ge\eta\int_{\tau}^{\tau'}|\xi_x'(s)|\,ds & \ge & \displaystyle\eta\ \Big|\!\!\int_\tau^{\tau'}\!\!\xi_x'(s)\,ds\Big|\vspace{3pt}\\
& = & \eta\,|\xi_x(\tau')-\xi_x(\tau)|\eaa
\ee
for every $\tau^-_x<\tau\le\tau'<\tau^+_x$. It then follows from the previous paragraph that
$$\psi(\xi_x(\tau))\to\pm\infty\ \hbox{ as $\tau\to\tau^\pm_x$},$$
completing the proof of Lemma~\ref{lem1}.
\end{proof}

\begin{remark}\label{rem1}
The fact that each function $g_x=\psi\circ\xi_x$ defined by~\eqref{defgx} is increasing in $I_x=(\tau^-_x,\tau^+_x)$ implies that the map $\xi_x:I_x\to\Xi_x$ is one-to-one, for each $x\in\overline{\Omega}$. In particular, each streamline $\Xi_x$ is a simple curve.
\end{remark}

The following lemma shows that the streamlines $\Xi_x$ all go from the same end to the other end of the cylinder $\Omega$, and that $\psi$ converges to opposite infinities at the ends of the cylinders. In the sequel, we call
$$\mathrm{e}_1:=(1,0,\cdots,0)$$
the first vector of the canonical basis of $\R^N$.

\begin{lemma}\label{lem2}
Under the assumptions of Lemma~$\ref{lem1}$, then either
$$\xi_x(\tau)\cdot\mathrm{e}_1\to\pm\infty\hbox{ as $\tau\to\tau^\pm_x$ for all $x\in\overline{\Omega}$},$$
or
$$\xi_x(\tau)\cdot\mathrm{e}_1\to\mp\infty\hbox{ as $\tau\to\tau^\pm_x$ for all $x\in\overline{\Omega}$}.$$
Furthermore, in the former case, then
$$\psi(x_1,\cdot)\to\pm\infty\hbox{ as $x_1\to\pm\infty$ uniformly in $\overline{\omega}$},$$
while
$$\psi(x_1,\cdot)\to\mp\infty\hbox{ as $x_1\to\pm\infty$ uniformly in $\overline{\omega}$}$$
in the latter case.
\end{lemma}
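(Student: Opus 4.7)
The plan is to combine a topological analysis of the level sets of $\psi$ with two Cauchy-Schwarz estimates on streamline displacements. First, for every $x\in\overline{\Omega}$, the limit $\lim_{\tau\to\tau_x^\pm}\xi_x(\tau)\cdot\mathrm{e}_1$ exists in $\{-\infty,+\infty\}$: Lemma~\ref{lem1} gives $|\xi_x(\tau)|\to+\infty$, so boundedness of $\overline{\omega}$ forces the first coordinate to be unbounded in absolute value, and continuity of $\xi_x$ together with the intermediate value theorem rules out oscillation. I would then examine the level sets $L(K):=\{x\in\overline{\Omega}:\psi(x)=K\}$. Since the function $g_x=\psi\circ\xi_x$ from Lemma~\ref{lem1} is a strictly increasing bijection $I_x\to\R$, every streamline meets each $L(K)$ in exactly one point, and continuous dependence together with $|\nabla\psi|^2\ge\eta^2$ (which allows a global reparametrisation of the flow by the value of $\psi$) show that the projection $P_K:\overline{\Omega}\to L(K)$ is continuous. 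As $\overline{\Omega}$ is connected and $P_K$ is a continuous surjection, each $L(K)$ is connected. Since $\psi$ is bounded on every slab $[-R,R]\times\overline{\omega}$, for $|K|$ large $L(K)\cap([-R,R]\times\overline{\omega})=\emptyset$ and connectedness forces $L(K)$ into one half-cylinder $\{x_1>R\}$ or $\{x_1<-R\}$; tracking $P_K(x_0)$ continuously in $K$ then shows this half is fixed on the regime $K\to+\infty$ (call it $s_+$) and on $K\to-\infty$ (call it $s_-$), with associated barriers $R(K)\to+\infty$ as $|K|\to+\infty$.

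The main technical step is to rule out $s_+=s_-$. Assuming both correspond to the right half $\{x_1>R_0\}$, any point of $\{x_1\le-R_0\}\cap\overline{\Omega}$ cannot lie in $L(K)$ for $|K|$ large, so $|\psi|\le K_0$ there for some $K_0$. Pick $R>R_0$ arbitrarily large and consider $\xi_{(-R,y)}$; since $|\xi|\to+\infty$ forward, there is a first forward time $\tau^+>0$ with $\xi(\tau^+)\cdot\mathrm{e}_1=-R_0$. Throughout $[0,\tau^+]$ the trajectory stays in $\{x_1\le-R_0\}$, so $|\psi|\le K_0$ on it and the $\psi$-increment is at most $2K_0$; from $g_x'=|\nabla\psi|^2\ge\eta^2$ this also gives $\tau^+\le 2K_0/\eta^2$. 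Cauchy-Schwarz then yields
\[
R-R_0\le|\xi(\tau^+)-\xi(0)|\le\int_0^{\tau^+}|\nabla\psi|\,ds\le\Bigl(\int_0^{\tau^+}|\nabla\psi|^2\,ds\Bigr)^{\!1/2}(\tau^+)^{1/2}\le\frac{2K_0}{\eta},
\]
contradicting the freedom to take $R$ arbitrarily large. So $s_+\ne s_-$; combining this with $\psi(\xi_x(\tau))\to\pm\infty$ along streamlines and $R(K)\to+\infty$ yields the claimed dichotomy for $\xi_x(\tau)\cdot\mathrm{e}_1$.

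To prove the uniform convergence of $\psi$ I would run essentially the same estimate. In the case $\xi_x(\tau)\cdot\mathrm{e}_1\to\pm\infty$ as $\tau\to\tau_x^\pm$, suppose for contradiction that there are $M\in\R$, $R_n\to+\infty$ and $y_n\in\overline{\omega}$ with $\psi(R_n,y_n)\le M$. Since backward ends go to $-\infty$, the streamline through $(R_n,y_n)$ crosses $\{x_1=0\}$ at some $\tau_n<0$, at a point where $|\psi|\le C:=\sup_{\{0\}\times\overline{\omega}}|\psi|$. The $\psi$-increment over $[\tau_n,0]$ is at most $M+C$, so $|\tau_n|\le(M+C)/\eta^2$ and the Cauchy-Schwarz estimate above gives $R_n\le(M+C)/\eta$, contradicting $R_n\to+\infty$. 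The symmetric statement at $x_1\to-\infty$ is handled by the same argument with $\psi$ and $\mathrm{e}_1$ reversed, and the other case of the dichotomy is symmetric.

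The main obstacle I expect is the opposite-sides step of the second paragraph, where one must convert the purely topological information $s_+=s_-$ into the quantitative a priori bound $|\psi|\le K_0$ on one half-cylinder and then exploit it by the correct choice of streamline. Once this "trapped-$\psi$ implies bounded displacement" mechanism is in place via Cauchy-Schwarz, the remaining steps follow from continuity and the monotonicity of $\psi$ along streamlines.
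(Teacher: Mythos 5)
Your argument is correct in substance, but it takes a genuinely different route from the paper for the first (dichotomy) part. The paper partitions $\overline{\Omega}$ into the four relatively open sets of points whose streamline ends go, in the $\mathrm{e}_1$-direction, to $(+\infty,+\infty)$, $(-\infty,-\infty)$, $(-\infty,+\infty)$ or $(+\infty,-\infty)$, uses connectedness of $\overline{\Omega}$ to conclude that one of them is all of $\overline{\Omega}$, and excludes the two same-side cases by applying the estimate $\psi(\xi_x(\tau'))-\psi(\xi_x(\tau))\ge\eta\,|\xi_x(\tau')-\xi_x(\tau)|$ to a streamline crossing $\{x_1=0\}$ twice. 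You instead prove that every level set $L(K)$ is connected, as the image of $\overline{\Omega}$ under the retraction $P_K$ along streamlines (whose continuity, i.e.\ continuity of the flow reparametrised by the value of $\psi$, rests on the same Cauchy--Lipschitz continuous-dependence input that the paper uses for the relative openness of its four sets, and should be spelled out at the same level of detail), then localize $L(K)$ in a half-cylinder for $|K|$ large, and exclude the same-side configuration by the trapped-$\psi$ estimate; your Cauchy--Schwarz chain ($\psi$-increment $\le 2K_0$, time $\le 2K_0/\eta^2$, displacement $\le 2K_0/\eta$) is exactly the paper's pointwise inequality $g_x'\ge\eta\,|\xi_x'|$ in integrated form, so the quantitative mechanism is identical. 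A pleasant by-product of your route is that the ``for all $x$'' uniformity of the dichotomy drops out of the localization of the level sets rather than from a case analysis on $\overline{\Omega}$; the second half (uniform divergence of $\psi(x_1,\cdot)$ via a crossing of $\{x_1=0\}$) is essentially the paper's argument. One small repair is needed: the existence of the first crossing time $\tau^+$ with $\xi(\tau^+)\cdot\mathrm{e}_1=-R_0$ does not follow from $|\xi|\to+\infty$ forward, since a priori the trajectory could escape with $x_1\to-\infty$; it follows instead from Lemma~\ref{lem1}, because $\psi(\xi(\tau))\to+\infty$ forward while $|\psi|\le K_0$ on $\{x_1\le-R_0\}\cap\overline{\Omega}$, so the trajectory must leave that region and hence, by continuity, cross $\{x_1=-R_0\}$. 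With that one-line fix your proof is complete.
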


\begin{proof}
For each $x\in\overline{\Omega}$, it follows from Lemma~\ref{lem1} and the continuity of the map $\xi_x:I_x\to\overline{\Omega}$ that either $\xi_x(\tau)\cdot\mathrm{e}_1\to+\infty$ as $\tau\to\tau^\pm_x$, or $\xi_x(\tau)\cdot\mathrm{e}_1\to-\infty$ as $\tau\to\tau^\pm_x$, or $\xi_x(\tau)\cdot\mathrm{e}_1\to\pm\infty$ as $\tau\to\tau^\pm_x$, or $\xi_x(\tau)\cdot\mathrm{e}_1\to\mp\infty$ as $\tau\to\tau^\pm_x$. Therefore, by defining
$$\left\{\baa{l}
\Omega_1:=\big\{x\in\overline{\Omega}:\xi_x(\tau)\cdot\mathrm{e}_1\to+\infty\hbox{ as $\tau\to\tau^\pm_x$}\big\},\vspace{5pt}\\
\Omega_2:=\big\{x\in\overline{\Omega}:\xi_x(\tau)\cdot\mathrm{e}_1\to-\infty\hbox{ as $\tau\to\tau^\pm_x$}\big\},\vspace{5pt}\\
\Omega_3:=\big\{x\in\overline{\Omega}:\xi_x(\tau)\cdot\mathrm{e}_1\to\pm\infty\hbox{ as $\tau\to\tau^\pm_x$}\big\},\vspace{5pt}\\
\Omega_4:=\big\{x\in\overline{\Omega}:\xi_x(\tau)\cdot\mathrm{e}_1\to\mp\infty\hbox{ as $\tau\to\tau^\pm_x$}\big\},\eaa\right.$$
there holds
\be\label{Omegai}
\overline{\Omega}=\Omega_1\cup\Omega_2\cup\Omega_3\cup\Omega_4.
\ee

The sets $\Omega_i$'s (for $i=1,\cdots,4$) are clearly pairwise disjoint. Let us now show that they are all open relatively to $\overline{\Omega}$. It will then follow by connectivity of $\overline{\Omega}$ that $\overline{\Omega}$ will be equal to one of the $\Omega_i$'s, and we will next rule out the cases $\overline{\Omega}=\Omega_1$ and $\overline{\Omega}=\Omega_2$.

Let us first show that the set $\Omega_1$ is open relatively to $\overline{\Omega}$. Let us define
\be\label{defM}
M:=\max_{\overline{\omega}}|\psi(0,\cdot)|.
\ee
Pick any $x$ in $\Omega_1$. From Lemma~\ref{lem1} and the definition of $\Omega_1$, there are $s^-<s^+$ in the interval $(\tau^-_x,\tau^+_x)$ such that
$$\xi_x(s^\pm)\cdot\mathrm{e}_1>0\ \hbox{ and }\ \psi(\xi_x(s^-))<-M\le M<\psi(\xi_x(s^+)).$$
From the Cauchy-Lipschitz theorem, there exists $r>0$ such that, for every $y\in\overline{\Omega}$ with~$|y-x|<r$, one has
$$[s^-,s^+]\subset(\tau^-_y,\tau^+_y),\ \ \xi_y(s^\pm)\cdot\mathrm{e}_1>0,\ \hbox{ and }\ \psi(\xi_y(s^-))<-M\le M<\psi(\xi_y(s^+)).$$
For each such $y$, Remark~\ref{rem1} entails that
$$\left\{\baa{ll}
\psi(\xi_y(\tau))<-M & \hbox{for all $\tau\in(\tau^-_y,s^-]$},\vspace{3pt}\\
\psi(\xi_y(\tau))>M & \hbox{for all $\tau\in[s^+,\tau^+_y)$}.\eaa\right.$$
Together with the definition of $M$ and the continuity of $\xi_y$, one infers that
$$\xi_y(\tau)\cdot\mathrm{e}_1>0\ \hbox{ for all $\tau\in(\tau^-_y,s^-]\cup[s^+,\tau^+_y)$},$$
and Lemma~\ref{lem1} finally gives that $\xi_y(\tau)\cdot\mathrm{e}_1\to+\infty$ as $\tau\to\tau^\pm_y$. In other words, any point~$y$ in~$\overline{\Omega}$ such that $|y-x|<r$ belongs to $\Omega_1$. The set $\Omega_1$ is thus open relatively to $\overline{\Omega}$. Similarly, so is the set $\Omega_2$.

Let us then show that the set $\Omega_3$ is open relatively to $\overline{\Omega}$. Pick any in $\Omega_3$. From Lemma~\ref{lem1} and the definition of $\Omega_3$, there are $\sigma^-<\sigma^+$ in the interval $(\tau^-_x,\tau^+_x)$ such that
$$\xi_x(\sigma^-)\cdot\mathrm{e}_1<0<\xi_x(\sigma^+)\cdot\mathrm{e}_1\ \hbox{ and }\ \psi(\xi_x(\sigma^-))<-M\le M<\psi(\xi_x(\sigma^+)).$$
From the Cauchy-Lipschitz theorem, there exists $\rho>0$ such that, for every $y\in\overline{\Omega}$ with~$|y-x|<\rho$, one has
$$[\sigma^-,\sigma^+]\subset\!(\tau^-_y,\tau^+_y),\ \ \xi_y(\sigma^-)\cdot\mathrm{e}_1<0<\xi_y(\sigma^+)\cdot\mathrm{e}_1$$
and
$$\psi(\xi_y(\sigma^-))<-M\le M<\psi(\xi_y(\sigma^+)).$$
As in the previous paragraph, for each such $y$, one has $\psi(\xi_y(\tau))<-M$ for all $\tau\in(\tau^-_y,\sigma^-]$ and $\psi(\xi_y(\tau))>M$ for all $\tau\in[\sigma^+,\tau^+_y)$, hence
$$\xi_y(\tau)\cdot\mathrm{e}_1<0\hbox{ for all $\tau\in(\tau^-_y,\sigma^-]\ $ and $\ \xi_y(\tau)\cdot\mathrm{e}_1>0$ for all $\tau\in[\sigma^+,\tau^+_y)$}.$$
One gets from Lemma~\ref{lem1} that $\xi_y(\tau)\cdot\mathrm{e}_1\to\pm\infty$ as $\tau\to\tau^\pm_y$, that is, $y\in\Omega_3$. The set $\Omega_3$ is thus open relatively to $\overline{\Omega}$. Similarly, so is the set~$\Omega_4$.

From~\eqref{Omegai} and the connectivity of $\overline{\Omega}$, it follows that there is $i\in\{1,2,3,4\}$ such that
$$\overline{\Omega}=\Omega_i.$$
Assume by way of contradiction that $i=1$. Consider any $x\in\overline{\Omega}$ such that~$x_1=x\cdot\mathrm{e}_1\le0$. Owing to the definition of $\Omega_1$, there are $\theta^-\in(\tau^-_x,0]$ and $\theta^+\in[0,\tau^+_x)$ such that~$\xi_x(\theta^\pm)\cdot\mathrm{e}_1=0$. From the definition of $M$ in~\eqref{defM} and a calculation similar to~\eqref{psitautau'}, one gets that
$$\baa{rcl}
\displaystyle2M\ge\psi(\xi_x(\theta^+))-\psi(\xi_x(\theta^-)) & \ge & \displaystyle\eta\,\Big|\int_{\theta^-}^0\xi_x'(s)\,ds\Big|+\eta\,\Big|\int_0^{\theta^+}\xi_x'(s)\,ds\Big|\vspace{3pt}\\
& = & \eta\times\big(|x-\xi_x(\theta^-)|+|x-\xi_x(\theta^+)|\big)\ge2\,\eta\,|x_1|.\eaa$$
The limit as $x_1\to-\infty$ leads to a contradiction. Therefore, $\overline{\Omega}$ cannot be equal to $\Omega_1$. Similarly, $\overline{\Omega}$ can not be equal to $\Omega_2$. As a consequence,
$$\hbox{either $\overline{\Omega}=\Omega_3\ $ or $\ \overline{\Omega}=\Omega_4$}.$$
This provides the first part of the conclusion of Lemma~\ref{lem2}.

To show the last part of the conclusion, we only consider the case $\overline{\Omega}=\Omega_3$ (the case~$\overline{\Omega}=\Omega_4$ can be handled similarly). One has to show that $\psi(x_1,\cdot)\to\pm\infty$ as $x_1\to\pm\infty$, uniformly in $\overline{\omega}$. For any $x\in\overline{\Omega}$ such that $x_1\ge0$, since we are in the case $\overline{\Omega}=\Omega_3$, there is then $\vartheta^-\in(\tau^-_x,0]$ such that $\xi_x(\vartheta^-)\cdot\mathrm{e}_1=0$. A similar calculation as in the previous paragraph then gives
$$\psi(x)+M\ge\psi(\xi_x(0))-\psi(\xi_x(\vartheta^-))\ge\eta\,|x-\xi_x(\vartheta^-)|\ge\eta\,x_1.$$
Thus, $\psi(x_1,\cdot)\to+\infty$ as $x_1\to+\infty$, uniformly in $\overline{\omega}$. The limit
$$\lim_{x_1\to-\infty}\Big(\max_{\overline{\omega}}\psi(x_1,\cdot)\Big)=-\infty$$
holds similarly, and the proof of Lemma~\ref{lem2} is thereby complete.
\end{proof}

%%%%%%%%%%%%%%%%%%%%%%%%%%%%%%%%%%%%%%%%%%%%%%%%%%%%%%%%%%
%%%%%%%%%%%%%%%%%%%%%%%%%%%%%%%%%%%%%%%%%%%%%%%%%%%%%%%%%%

\section{Proof of Theorems~\ref{th1}-\ref{th3}}\label{sec3}

We consider a vector field $v=\nabla\psi$ satisfying $v\cdot n=0$ on~$\partial\Omega$, for a $C^2(\Omega,\R)\cap C^1(\overline{\Omega},\R)$ function $\psi$ such that $\Delta\psi=0$ in $\Omega$ and $\inf_{\overline{\Omega}}|v|=\inf_{\overline{\Omega}}|\nabla\psi|>0$. From Lemma~\ref{lem2}, up to changing $\psi$ and $v$ into $-\psi$ and $-v$, one can assume without loss of generality that
\be\label{psilim}
\psi(x_1,\cdot)\to\pm\infty\ \hbox{ as $x_1\to\pm\infty$, uniformly in $\overline{\omega}$}.
\ee
Denote
$$a:=\liminf_{x_1\to+\infty}\frac{\displaystyle\min_{\overline{\omega}}\psi(x_1,\cdot)}{x_1}\ \in[0,+\infty].$$
We shall show that $a\in(0,+\infty)$ and that there is $b\in\R$ such that $\psi(x)=ax_1+b$ for all $x=(x_1,\cdots,x_N)\in\overline{\Omega}$.

To do so, consider first any $\alpha\in(-\infty,a)$ (thus, $\alpha\in\R$, even if $a$ were equal to $+\infty$). Consider $M\in\R$ as in~\eqref{defM}. Since $\alpha<a$, there is $A>0$ such that
$$\psi(x)\ge -M+\alpha x_1\ \hbox{ for all $x\in[A,+\infty)\times\overline{\omega}$}.$$
Since the function $x\mapsto-M+\alpha x_1$ is harmonic, below $\psi$ on $(\{0\}\times\overline{\omega})\,\cup\,([A,+\infty)\times\overline{\omega})$ and since it satisfies Neumann boundary conditions on $\partial\Omega=\R\times\partial\omega$ (as it is independent of the variables $(x_2,\cdots,x_N)$), the maximum principle and the Hopf lemma entail that
\be\label{ineqpsi}
\psi(x)\ge-M+\alpha x_1\ \hbox{ in $[0,B]\times\overline{\omega}$},
\ee
for every $B\ge A$. Indeed, for $B\ge A$, if the harmonic function $\tilde{\psi}:x\mapsto\psi(x)-(-M+\alpha x_1)$ reaches its minimum in $[0,B]\times\overline{\omega}$ at an interior point $x_m\in(0,B)\times\omega$, then the strong interior maximum principle implies that $\tilde{\psi}$ is constant, hence it is nonnegative as it is so on $\{0,B\}\times\overline{\omega}$; on the other hand, if $\tilde{\psi}$ reaches its minimum at a point~$y_m$ lying on $(0,B)\times\partial\omega$ without having any interior minimum point, then the Hopf lemma implies that $\nabla\tilde{\psi}\cdot n<0$ at $y_m$, a contradiction; therefore, there always holds $\min_{[0,B]\times\overline{\omega}}\tilde{\psi}=\min_{\{0,B\}\times\overline{\omega}}\tilde{\psi}\ge0$, yielding~\eqref{ineqpsi}. Therefore, the limit as $B\to+\infty$ yields
$$\psi(x)\ge-M+\alpha x_1\ \hbox{ for all $x\in[0,+\infty)\times\overline{\omega}$}.$$
Since $\alpha$ is arbitrary in $(-\infty,a)$, it follows that $a\in\R$, thus $0\le a<+\infty$, and
\be\label{ineqinf}
\psi(x)\ge -M+ax_1\ \hbox{ for all $x\in[0,+\infty)\times\overline{\omega}$}.
\ee

Now, since the harmonic function $\phi$ defined by
$$\phi(x):=\psi(x)-(-M+ax_1)$$
satisfies the Neumann boundary conditions on $\partial\Omega$ and is nonnegative in $[0,+\infty)\times\overline{\omega}$, one infers from the Harnack inequality that there is a constant $C\in[1,+\infty)$ such that
$$\phi(x_1,y)\le C\phi(x_1,y')\ \hbox{ for all $x_1\ge1$ and $y,y'\in\overline{\omega}$}.$$
Consider any $\epsilon>0$. Owing to the definition of $a$, there is a sequence $(s_n)_{n\in\N}$ in $[1,+\infty)$ diverging to $+\infty$ such that $\min_{\overline{\omega}}\psi(s_n,\cdot)\le(a+\epsilon)\,s_n$, hence
$$\min_{\overline{\omega}}\phi(s_n,\cdot)\le\epsilon\,s_n+M$$
for all $n\in\N$. Therefore,
$$\max_{\overline{\omega}}\phi(s_n,\cdot)\le C(\epsilon\,s_n+M)$$
for all $n\in\N$, that is,
$$\max_{\overline{\omega}}\psi(s_n,\cdot)\le (a+C\epsilon)\,s_n+(C-1)M.$$
As in the previous paragraph, the maximum principle and the Hopf lemma imply that
$$\psi(x)\le M+(a+C\epsilon)\,x_1+(C-1)M\ \hbox{ for all $n\in\N$ and $x\in[0,s_n]\times\overline{\omega}$}$$
(the additional constant $M$ in the right-hand side guarantees the comparison on $\{0\}\times\overline{\omega}$). Since $s_n\to+\infty$ as $n\to+\infty$ and $\epsilon>0$ was arbitrary, one gets that
$$\psi(x)\le CM+ax_1\ \hbox{ for all $x\in[0,+\infty)\times\overline{\omega}$}.$$
Together with~\eqref{ineqinf}, this means that the function $x\mapsto\psi(x)-ax_1$ is bounded in $[0,+\infty)\times\overline{\omega}$.

Similarly, remembering~\eqref{psilim} and calling
$$a^-:=\liminf_{x_1\to-\infty}\frac{\displaystyle\max_{\overline{\omega}}\psi(x_1,\cdot)}{x_1}\ \in[0,+\infty],$$
one can show that $a^-$ is actually a real number, in $[0,+\infty)$, and that the function $x\mapsto\psi(x)-a^-x_1$ is bounded in $(-\infty,0]\times\overline{\omega}$.

Assume now by way of contradiction that $a^-\neq a$. Assume first that $a^-<a$, and pick any $\beta$ such that $a^-<\beta<a$. From the conclusions of the previous two paragraphs, one knows that
$$\psi(x)-\beta x_1\to+\infty\ \hbox{ as $x_1\to\pm\infty$ uniformly in $\overline{\omega}$}.$$
Therefore, by calling $\gamma$ the minimum value of the function $x\mapsto\psi(x)-\beta x_1$, the function $x\mapsto\psi(x)-\beta x_1-\gamma$ is then nonnegative in $\overline{\Omega}$ and vanishes somewhere in $\overline{\Omega}$. Since it is harmonic in $\Omega$ and satisfies Neumann boundary conditions on $\partial\Omega$, the maximum principle and the Hopf lemma imply that
$$\psi(x)-\beta x_1-\gamma=0\ \hbox{ for all $x\in\overline{\Omega}$},$$
which is clearly impossible since $\psi(x)-\beta x_1\to+\infty$ as $x_1\to\pm\infty$ uniformly in $\overline{\omega}$. As a consequence, $a^-$ can not be less than $a$. Similarly, if $a^-$ were larger than $a$, by putting above $\psi$ some functions of the type $x\mapsto\delta x_1+\rho$ with $\delta\in(a,a^-)$ and some $\rho\in\R$, one gets a similar contradiction. In other words,
$$a^-=a,$$
and the harmonic function $\varphi$ defined in $\overline{\Omega}$ by
$$\varphi(x):=\psi(x)-ax_1,$$
is bounded in $\overline{\Omega}$.\footnote{Another way to conclude that $a^-=a$ would be to integrate the equation $\Delta\psi=0$ over $(s_1,s_2)\times\omega$, for arbitrary $s_1<s_2$ in $\R$, to get that the map $x_1\mapsto\int_\omega\partial_{x_1}\psi(x_1,\cdot)$ is equal to a constant $c\in\R$. Hence, $\int_\omega\psi(x_1,\cdot)/x_1\to c$ as $x_1\to\pm\infty$, implying that $c=a=a^-$, because the functions $x\mapsto\psi(x)-ax_1$ and $x\mapsto\psi(x)-a^-x_1$ are bounded respectively in $[0,+\infty)\times\overline{\omega}$ and $(-\infty,0]\times\overline{\omega}$.}

It is now quite standard to conclude that $\varphi$ is constant in $\overline{\Omega}$. For the sake of completeness, let us briefly sketch one elementary proof. So, let $Y\in\overline{\omega}$ be such that
$$b:=\min_{\overline{\omega}}\varphi(0,\cdot)=\varphi(0,Y).$$
For any $\epsilon>0$, there is $A_\epsilon>0$ such that $\varphi(x)\ge b-\epsilon x_1$ for all $x\in[A_\epsilon,+\infty)\times\overline{\omega}$ and, as above, it follows from the maximum principle and the Hopf lemma that $\varphi(x)\ge b-\epsilon x_1$ for all $B\ge A_\epsilon$ and $x\in[0,B]\times\overline{\omega}$, hence
$$\varphi(x)\ge b\ \hbox{ for all $x\in[0,+\infty)\times\overline{\omega}$}$$
by passing to the limits $B\to+\infty$ and then $\epsilon\to0$. Similarly, one gets that $\varphi(x)\ge b$ for all $x\in(-\infty,0]\times\overline{\omega}$. Therefore,
$$\varphi(x)\ge b=\varphi(0,Y)\ \hbox{ for all $x\in\overline{\Omega}$}.$$
Since $\varphi$ is harmonic in $\Omega$ and satisfies Neumann boundary conditions on $\partial\Omega$, the maximum principle and the Hopf lemma imply that $\varphi$ is constant in $\overline{\Omega}$, that is,
$$\psi(x)=ax_1+b\ \hbox{ for all $x\in\overline{\Omega}$}.$$
As $\inf_{\overline{\Omega}}|\nabla\psi|>0$ by~\eqref{hyp1}-\eqref{hyp2}, the real number $a$ is not zero. Lastly, since $v=\nabla\psi$ is constant, so is $p$ necessarily, and the proof of Theorems~\ref{th1}-\ref{th3} is thereby complete.\hfill$\Box$

%%%%%%%%%%%%%%%%%%%%%%%%%%%%%%%%%%%%%%%%%%%%%%%%%%%%%%%%%%
%%%%%%%%%%%%%%%%%%%%%%%%%%%%%%%%%%%%%%%%%%%%%%%%%%%%%%%%%%

\section{An alternate proof of Theorems~\ref{th1}-\ref{th3} in dimension $N=2$}\label{sec4}

Consider in this section a two-dimensional strip
$$\Omega=\R\times(\alpha^-,\alpha^+)\subset\R^2,$$
with $\alpha^-<\alpha^+\in\R$, and a vector field $v=\nabla\psi$ satisfying $v\cdot n=0$ on~$\partial\Omega=\R\times\{\alpha^-,\alpha^+\}$, for a $C^2(\Omega,\R)\cap C^1(\overline{\Omega},\R)$ function $\psi$ such that $\Delta\psi=0$ in $\Omega$ and $\inf_{\overline{\Omega}}|v|=\inf_{\overline{\Omega}}|\nabla\psi|>0$. 

First of all, let $\tilde{\psi}$ be the function defined in $\tilde{\Omega}:=\R\times(2\alpha^--\alpha^+,2\alpha^+-\alpha^-)$ by
$$\tilde{\psi}(x)=\tilde{\psi}(x_1,x_2)=\left\{\baa{ll}
\psi(x_1,2\alpha^--x_2) & \hbox{if }2\alpha^--\alpha^+<x_2<\alpha^-,\vspace{3pt}\\
\psi(x_1,x_2) & \hbox{if }\alpha^-\le x_2\le\alpha^+,\vspace{3pt}\\
\psi(x_1,2\alpha^+-x_2) & \hbox{if }\alpha^+<x_2<2\alpha^+-\alpha^-.\eaa\right.$$
Since $\nabla\psi\cdot n=0$ on $\partial\Omega=\R\times\{\alpha^-,\alpha^+\}$, the function $\tilde{\psi}$ is of class $C^1(\tilde{\Omega},\R)$, and it is also a weak harmonic function, that is, $\int_{\tilde{\Omega}}\nabla\tilde{\psi}\cdot\nabla\varphi=0$ for all $\varphi\in C^1_c(\tilde{\Omega},\R)$. Then $\tilde{\psi}$ is harmonic in $\tilde{\Omega}$, hence $\psi$ is of class $C^\infty(\overline{\Omega},\R)$, and in particular of class $C^2$ up to the boundary~$\partial\Omega$.

Consider now the function $\phi:\overline{\Omega}\to\R$ given by
$$\phi:=|\nabla\psi|^{-2}=\frac{1}{(\partial_{x_1}\psi)^2+(\partial_{x_2}\psi)^2}.$$
From the previous paragraph and the assumption $\inf_{\overline{\Omega}}|\nabla\psi|>0$, $\phi$ is a positive bounded function of class $C^\infty(\overline{\Omega},\R)$. Using the identities
$$\partial_{x_1x_1}\psi+\partial_{x_2x_2}\psi=\Delta\psi=\Delta\partial_{x_1}\psi=\Delta\partial_{x_2}\psi=0\ \hbox{ in $\overline{\Omega}$},$$
one finds after a straightforward calculation that $\phi$ satisfies
\be\label{Deltaphi}
\Delta\phi=2\,\phi^2\,\sum_{1\le i,j\le 2}(\partial_{x_ix_j}\psi)^2\ge0\ \hbox{ in }\overline{\Omega}.
\ee
Furthermore, since $\partial_{x_2}\psi(x_1,\alpha^\pm)=0$ and then $\partial_{x_1x_2}\psi(x_1,\alpha^\pm)=0$ for all $x_1\in\R$, one gets that
$$\baa{rcl}
\partial_{x_2}\phi(x_1,\alpha^\pm) & \!\!\!=\!\!\! & -2\,\phi(x_1,\alpha^\pm)^2\,\big(\partial_{x_1}\psi(x_1,\alpha^\pm)\partial_{x_1x_2}\psi(x_1,\alpha^\pm)+\partial_{x_2}\psi(x_1,\alpha^\pm)\partial_{x_2x_2}\psi(x_1,\alpha^\pm)\big)\vspace{3pt}\\
& \!\!\!=\!\!\! & 0\eaa$$
for all $x_1\in\R$, that is,
\be\label{neumann}
\nabla\phi\cdot n=0\ \hbox{ on $\partial\Omega$}.
\ee
Now, for any $s_1<s_2$ in $\R$, integrating the inequation $\Delta\phi\ge0$ over $(s_1,s_2)\times(\alpha^-,\alpha^+)$ leads to
$$0\le\int\!\!\!\!\int_{(s_1,s_2)\times(\alpha^-,\alpha^+)}\Delta\phi(x_1,x_2)\,dx_1\,dx_2=\int_{\alpha^-}^{\alpha^+}\partial_{x_1}\phi(s_2,x_2)\,dx_2-\int_{\alpha^-}^{\alpha^+}\partial_{x_1}\phi(s_1,x_2)\,dx_2,$$
hence the function $x_1\mapsto\int_{\alpha^-}^{\alpha^+}\partial_{x_1}\phi(x_1,x_2)\,dx_2$ is nondecreasing and has two limits~$\ell^{\pm}$ in~$[-\infty,+\infty]$ as $x_1\to\pm\infty$. One then infers from the boundedness of $\phi$ that $\ell^\pm=0$. Therefore,
$$\int_{\alpha^-}^{\alpha^+}\partial_{x_1}\phi(x_1,x_2)\,dx_2=0$$
for all $x_1\in\R$, which in turns implies that $\int\!\!\!\int_{(s_1,s_2)\times(\alpha^-,\alpha^+)}\Delta\phi=0$ for all $s_1<s_2$. From~\eqref{Deltaphi} and the positivity of $\phi$, one gets that $\partial_{x_1x_1}\psi=\partial_{x_1x_2}\psi=\partial_{x_2x_2}\psi=0$ in $\overline{\Omega}$. In particular, the function $\partial_{x_2}\psi$ is constant in $\overline{\Omega}$ and, since it vanishes on $\partial\Omega$, it is identically $0$ in $\overline{\Omega}$. Therefore, $\psi$ is a function of the variable~$x_1$ only and, since $\partial_{x_1x_1}\psi=0$ in $\overline{\Omega}$, there exist two real numbers $a$ and $b$ such that $\psi(x)=a\,x_1+b$ for all $x\in\overline{\Omega}$. The condition $\inf_{\overline{\Omega}}|\nabla\psi|>0$ gives that $a\neq0$, completing the proof.\hfill$\Box$

\begin{remark}
One uses the fact that $N=2$ to derive~\eqref{neumann}, that is, $\nabla\phi\cdot n=0$ on~$\partial\Omega$. This property does not hold in general in dimension $N\ge3$. For instance, let $D$ be the open Euclidean disk in $\R^2$ centered at the origin and with radius $R>0$, let $(r,\theta)$ be the usual polar coordinates in $\R^2$, and let $\psi$ be a $C^2(\R\times\overline{D},\R)$ harmonic function satisfying $\nabla\psi\cdot n=0$ on $\R\times\partial D$. In other words, $\partial_r\Psi(x_1,R,\theta)=0$ for all $(x_1,\theta)\in\R^2$, by calling~$\Psi(x_1,r,\theta)=\psi(x_1,r\cos\theta,r\sin\theta)$ for $(x_1,r,\theta)\in\R\times[0,R]\times\R$. Then the positive bounded function $\phi:=|\nabla\psi|^{-2}$ satisfies
$$(\nabla\phi\cdot n)(x_1,R\cos\theta,R\sin\theta)=\frac{2\,(\phi(x_1,R\cos\theta,R\sin\theta))^2\,(\partial_\theta\Psi(x_1,R,\theta))^2}{R^3}\ge0$$
for all $(x_1,\theta)\in\R^2$, that is, $\nabla\phi\cdot n\ge0$ on $\R\times\partial D$, and this nonnegative sign would make the arguments following~\eqref{neumann} ineffective.
\end{remark}

%%%%%%%%%%%%%%%%%%%%%%%%%%%%%%%%%%%%%%%%%%%%%%%%%%%%%%%%%%
%%%%%%%%%%%%%%%%%%%%%%%%%%%%%%%%%%%%%%%%%%%%%%%%%%%%%%%%%%

\end{document}